\patchcmd{\ttlh@hang}{\parindent\z@}{\parindent\z@\leavevmode}{}{}
\patchcmd{\ttlh@hang}{\noindent}{}{}{}
\newcommand{\minimize}[3]{\begin{array}{rl}
		{\underset{#2}{\textrm{minimize}}} & \begin{aligned}[t] 
			#1
		\end{aligned}  \\[15pt]
		\textrm{subject to} &
		\begin{aligned}[t] 
			#3
		\end{aligned}
	\end{array}}
	\newlength\figureheight 
	\newlength\figurewidth  
	\newtheorem{myassum}{Assumption}
	\newtheorem{mythm}{Theorem}
	\newtheorem{mycor}{Corollary}
	\newtheorem{myrem}{Remark}
	\newtheorem{mylem}{Lemma}
	\DeclareMathAlphabet{\mathcalOld}{OMS}{cmsy}{m}{n}
	\newcommand{\tfinal}{t_{f}}
	\newcommand{\tfinalnom}{\bar{t}_{f}}
	\newcommand{\tinitial}{t_{0}}
	\newcommand{\tstar}{T^{\star}}
	\newcommand{\xfinal}{\bm{x}_{f}}
	\newcommand{\xinitial}{\bm{x}_{0}}
	\newcommand{\xcurr}{\bm{x}_{\mathrm{cur}}}
	\newcommand{\tcurr}{t_k}
	\newcommand{\ustar}{\bm{u}^{\star}}
	\newcommand{\taustar}{\tau^{\star}}
\title{\LARGE \bf
An Optimization-Based Receding Horizon Trajectory Planning Algorithm 
}
\date{}
\author{Kristoffer Bergman, Oskar Ljungqvist, Torkel Glad and Daniel Axehill}% <-this % stops a space
\begin{document}
	
	% declaration of the new block
	\algblock{ParFor}{EndParFor}
	% customising the new block
	\algnewcommand\algorithmicparfor{\textbf{parfor}}
	\algnewcommand\algorithmicpardo{\textbf{do}}
	\algnewcommand\algorithmicendparfor{\textbf{end\ parfor}}
	\algrenewtext{ParFor}[1]{\algorithmicparfor\ #1\ \algorithmicpardo}
	\algrenewtext{EndParFor}{\algorithmicendparfor}

	\maketitle
	\thispagestyle{empty}
	\pagestyle{empty}

	%%%%%%%%%%%%%%%%%%%%%%%%%%%%%%%%%%%%%%%%%%%%%%%%%%%%%%%%%%%%%%%%%%%%%%%%%%%%%%%%		
		\textbf{\textit{Abstract ---}}\textbf{This paper presents an optimization-based receding horizon trajectory planning algorithm for dynamical systems operating in unstructured and cluttered environments. The proposed approach is a two-step procedure that uses a motion planning algorithm in a first step to efficiently find a feasible, but possibly suboptimal, nominal solution to the trajectory planning problem where in particular the combinatorial aspects of the problem are solved. The resulting nominal trajectory is then improved in a second optimization-based receding horizon planning step which performs local trajectory refinement over a sliding time window. In the second step, the nominal trajectory is used in a novel way to both represent a terminal manifold and obtain an upper bound on the cost-to-go online. This enables the possibility to provide theoretical guarantees in terms of recursive feasibility, objective function value, and convergence to the desired terminal state. The established theoretical guarantees and the performance of the proposed algorithm are verified in a set of challenging trajectory planning scenarios for a truck and trailer system.   }
	
\section{Introduction} \label{sec:intro}
In recent decades, an extensive amount of research has been conducted in the area of motion planning for autonomous vehicles~\citep{lavalle2006planning, paden2016survey}. However, the problem of computing locally optimal trajectories for dynamical systems in confined and unstructured environments is still considered as a difficult task. In this paper, the optimal motion planning problem is defined as the problem of finding a feasible and collision-free trajectory that brings the system from its initial state to a desired terminal state while a performance measure is minimized. The computed trajectory is then intended to be used as reference to a trajectory tracking or path following controller~\citep{LjungqvistCDC2018,paden2016survey,ljungqvist2019path}.
%Goal state tycker jag vi ska säga så vi inte förväxlar med terminal manifold alt. terminal state for RHOP?

The optimal motion planning problem is in general hard to solve by directly applying optimal control techniques, since the problem in general is nonconvex due to obstacle-imposed constraints and nonlinear system dynamics. Therefore, approximate methods in terms of motion planning algorithms are commonly used~\citep{lavalle2006planning}. One commonly used approach for dynamical systems is to apply sampling-based planners, which are either based on random or deterministic exploration of the vehicle's state space~\citep{lavalle2006planning}. One approach based on random sampling is RRT$^{\star}$ which is a popular motion planning algorithm for dynamical systems where an efficient steering function is available~\citep{karaman2013sampling,banzhaf2018g}. %In these methods, the orginal {RRT} algorithm is extended with a rewiring step in each expansion of the search tree leading to an algorithm that shown to be to be asymptotically optimal~\citep{karaman2013sampling}. 
Unless an efficient steering function is available, the RRT$^{\star}$ algorithm becomes computationally inefficient as multiple optimal control problems (OCPs) have to be solved online at each tree expansion~\citep{stoneman2014embedding}. 

A popular deterministic sampling-based motion planner is the lattice-based motion planner, which uses a finite set of precomputed motion segments, or motion primitives, online to find an optimal solution to a discretized version of the motion planning problem~\citep{pivtoraiko2009differentially}. A benefit with this method is that efficient graph-search algorithms can be used online such as A$^{\star}$~\citep{hart1968formal}, making it real-time applicable~\citep{pivtoraiko2009differentially, ljungqvist2019path}. 
However, since the lattice-based planner uses a discretized search space, 
the computed solution can be noticeably suboptimal and a latter post-optimization step is often desirable to use~\citep{dolgov2010path,andreasson2015fast}. A related technique is proposed in our previous work in~\citep{bergman2019bimproved}, where an optimization-based improvement step is added, aiming at locally improving the solution from a lattice-based planner without being limited to a discrete search space. Compared to previous work, a tight integration between the motion planner and the optimization step was introduced. This new approach was shown to have significant benefits over existing related methods in terms of solution quality and reliability.
However, the introduced improvement step increases the motion planner's latency time and hence, the time before the trajectory can start being executed. To reduce the computation time of the improvement step, and thus enable a faster start of the execution phase, a receding horizon trajectory planning approach is proposed in this paper where the nominal trajectory from the motion planning algorithm is improved iteratively during the execution phase. 

Optimization-based receding horizon planning (RHP) is commonly used in on-road applications, where the structure of the road environment is utilized to evaluate several candidates with different terminal states centered around the vehicle's lane. In~\citep{werling2012optimal}, these candidates are efficiently computed using quintic polynomials. In unstructured environments, optimization-based RHP has mainly been applied on unmanned areal vehicles (UAVs)~\citep{schouwenaars2004receding,kuwata2005robust, liu2017planning}. The RHP approach is motivated in many applications due to limited sensing range, which makes it unnecessary to optimize the full horizon trajectory to the terminal state~\citep{liu2017planning}. Common for these methods are that outside the vehicle's planning range, a geometric planning algorithm is used to compute a simplified trajectory to the goal, e.g., a shortest distance trajectory that avoids known obstacles but disregards the system dynamics. The simplified trajectory is then used to estimate the cost-to-go, which enables a trade-off between short term and long term trajectory selection. This technique has been shown to work well for agile systems such as quadcopters. However, for systems that are less agile (such as truck and trailer systems), using, e.g., a geometric algorithm to estimate the cost-to-go can in worst case lead to infeasibility~\citep{pivtoraiko2009differentially,bergman2019bimproved}. 

To avoid potential infeasibility caused by using a simplified cost-to-go estimate when solving the RHP problem, the main contribution in this work is to use a nominal trajectory computed by a motion planning algorithm in a novel way to define a terminal manifold and an upper bound on the optimal cost-to-go. This result is utilized to provide theoretical guarantees on feasibility during the entire planning horizon, objective function value improvement and convergence to the terminal state. These theoretical results are used to define a practical RHP algorithm, whose performance is verified in a number of challenging motion planning problems for a truck and trailer system.

The remainder of the paper is organized as follows. The optimal motion planning problem is posed in Section~\ref{sec:prob}. In Section~\ref{sec:rhi}, the RHP problem is defined and theoretical guarantees presented. These results are used in Section~\ref{sec:alg} to present an algorithm to iteratively improve the nominal trajectory using RHP. A simulation study for a truck and trailer system is presented in Section~\ref{sec:Res}, followed by conclusions and future work in Section~\ref{sec:conc}.    

\section{Problem formulation} \label{sec:prob}
In this paper, continuous-time nonlinear systems in the form
\begin{equation} \label{eq:system}
\dot{\bm{x}}(t) = f(\bm{x}(t), \bm{u}(t)), \quad \bm{x}(\tinitial) = \xinitial,
\end{equation} 
are considered, where $\bm{x} \in \mathbf{R}^n$ and $\bm{u} \in \mathbf{R}^m$ denote the state and control signal of the system, respectively.  These are subject to the following constraints:
\begin{equation}
\bm{x} \in \mathcalOld{X} \subseteq \mathbf{R}^n, \quad \bm{u} \in \mathcalOld{U} \subseteq \mathbf{R}^m.
\end{equation}
Furthermore, the system should not collide with obstacles, where the obstacle region is defined as \mbox{$\mathcalOld{X}_{\text{obst}} \subset \mathbf{R}^n$}. Thus, in motion planning problems, the state space is constrained as:
\begin{equation} \label{eq:obst_av}
\bm{x} \in \mathcalOld{X}_{\text{free}} = \mathcalOld{X} \setminus \mathcalOld{X}_{\text{obst}}.
\end{equation}
This constraint is in general non-convex since $\mathcalOld{X}_{\text{free}}$ is defined as the complement set of $\mathcalOld{X}_{\text{obst}}$. 

The motion planning problem can now be defined as the problem of computing a feasible (i.e. satisfying \eqref{eq:system}-\eqref{eq:obst_av}) state and control signal trajectory $(\bm{x}(\cdot), \bm{u}(\cdot) )$  that moves the system from $\xinitial \in \mathcalOld{X}_{\text{free}}$ to a desired terminal state, $\xfinal \in \mathcalOld{X}_{\text{free}}$, while a performance measure~$J_{\text{tot}}$ is minimized. This problem can be posed as a continuous-time OCP:
\begin{equation}
\minimize{J_{\mathrm{tot}}(\xinitial,\bm{u}(\cdot)) = \int_{\tinitial}^{\tfinal} \ell (\bm{x}(t), \bm{u}(t)) \mathrm{d}t}{\bm{u}(\cdot), \;\tfinal}{&\bm{x}(\tinitial) = \xinitial,
	\quad \bm{x}(\tfinal) = \xfinal,  \\ &\dot{\bm{x}} (t) = f(\bm{x}(t),\bm{u}(t)),  \\ &\bm{x}(t) \in \mathcalOld{X}_{\mathrm{free}}, \; \bm{u}(t) \in \mathcalOld{U}&&\hspace*{-2ex}t \in [\tinitial,\tfinal].} \label{eq:cctoc}
\end{equation}
Here, the decision variable $\tfinal$ represents the time when the terminal state is reached. Furthermore, $\ell(\bm{x}, \bm{u})$ forms the cost function that is used to define the objective functional $J_{\mathrm{tot}}$.

\begin{myassum} \label{ass:ell}
	$\ell: \mathbf{R}^n \times \mathbf{R}^m \rightarrow \mathbf{R}^1$ is continuous, and $\ell(\bm{x},\bm{u}) \geq \varepsilon > 0$ for all $(\bm{x}, \bm{u}) \in \mathcalOld{X} \times  \mathcalOld{U}$.
\end{myassum}

\begin{myrem}
	Assumption~\ref{ass:ell} provides an explicit penalty on the terminal time. Hence,  $J_{\mathrm{tot}} \rightarrow \infty$ as $\tfinal \rightarrow \infty$.
\end{myrem}
One commonly used cost function for motion planning and optimal control problems can be written in the form:
\begin{equation} \label{eq:obj_ref}
\ell(\bm{x}, \bm{u}) = 1 + ||\bm{x} ||_Q^2 + ||\bm{u} ||_R^2,  
\end{equation}
in which the weight matrices $Q \succeq 0$ and $R \succeq 0$ are used to determine the trade-off between time duration (captured by the first term in \eqref{eq:obj_ref}) and other measures such as smoothness of a motion~\citep{ljungqvist2019path}.

As discussed in Section~\ref{sec:intro}, the problem in~\eqref{eq:cctoc} is hard to solve by applying direct optimal control techniques due to the non-convex obstacle avoidance constraints and the nonlinear dynamics. Hence, a good initialization strategy is required to enable the possibility of computing efficient and reliable solutions~\citep{bergman2019bimproved}. In this work, it is assumed that a motion planning algorithm (such as the ones described in Section~\ref{sec:intro}) has provided a nominal trajectory that moves the system from $\xinitial$ to $\xfinal$ and is at least a feasible solution to \eqref{eq:cctoc}. This trajectory is represented by $(\bar{\bm{x}}(\tau), \bar{\bm{u}}(\tau)), \; \tau \in [\tinitial, \; \tfinalnom]$, where $\bar{\bm{x}}(\tau)$ satisfies:
\begin{equation} \label{eq:lat_sol}
\bar{\bm{x}}(\tau) = \xinitial + \int_{\tinitial}^{\tau} f(\bar{\bm{x}}(t), \bar{\bm{u}}(t)) \mathrm{d}t
\end{equation}
This nominal trajectory $(\bar{\bm{x}}(\cdot), \bar{\bm{u}}(\cdot), \tfinalnom)$ is used computationally to warm-start the second RHP step, but also theoretically to guarantee convergence to the terminal state. A detailed description of this procedure is given in the next section.

\section{Receding horizon planning} \label{sec:rhi}
In this section, it will be shown how to use an optimization-based receding horizon planner to optimize a nominal trajectory already computed by a motion planning algorithm. The nominal trajectory is used in the RHP approach to represent a terminal manifold, which ensures the existence of a feasible trajectory to the terminal state beyond the current receding planning horizon. 

\subsection{Receding horizon planning formulation}
The problem of optimizing the nominal trajectory is solved using an iterative receding horizon approach. At each RHP iteration $k$ at time $t_k = t_0 + k\delta, \; {\delta > 0}, \; {k \in \mathbf{Z}_0}$, an OCP is solved over a sliding time window $[\tcurr, \tcurr + T]$, where $T \in (\delta, T_{\text{max}}]$ denotes its length in time. This optimization-based RHP problem is defined as:
\begin{equation}
\minimize{ &J(\bm{x}_{\text{cur}}, \bm{u}_k(\cdot), \tau_k) = \\ &\Psi_k(\tau_k) + \int_{\tcurr}^{\tcurr+T} \hspace{-1.5em} \ell (\bm{x}_k(t), \bm{u}_k(t)) \mathrm{d}t}{\bm{u}_k(\cdot), \;\tau_k}{&\bm{x}_k(\tcurr) = \xcurr,
	\; \; \bm{x}_k(\tcurr + T) = \bar{\bm{x}}_{k-1}(\tau_k)  \\ &\dot{\bm{x}}_k (t) = f(\bm{x}_k(t),\bm{u}_k(t)),  \\ &\bm{x}_k(t) \in \mathcal{X}_{\mathrm{free}}, &&\hspace{-17.8ex}t \in [\tcurr,\tcurr+T] \\
	& \bm{u}_k(t) \in \mathcal{U}. } \label{eq:mpc}
\end{equation}
Here, $\xcurr = \bar{\bm{x}}_{k-1}(t_k)$ is the predicted state of the system at time $\tcurr$, $\bar{\bm{x}}_{k-1}(\cdot)$ the previously optimized state trajectory at time $t_k$ (with $\bar{\bm{x}}_{-1}(\cdot) = \bar{\bm{x}}(\cdot))$ and $\Psi_k(\tau_k)$ the cost-to-go function. Compared to \eqref{eq:cctoc}, a subindex $k$ has been added to the state and control signal to clarify that it is related to the $k$:th RHP iteration. Furthermore, an additional decision variable $\tau_k$ has been added. This variable can be seen as a timing parameter and is used in the terminal constraint to select at what time instance the state at the end of the horizon \mbox{$\bm{x}_k(\tcurr + T)$} is connected to the previously optimized state trajectory $\bar{\bm{x}}_{k-1}(\cdot)$, which defines the terminal manifold. From this state on the terminal manifold, an open-loop control law is known that moves the system from $\bar{\bm{x}}_{k-1}(\tau_k), \tau_k \in [\tinitial, \tfinalnom^{k-1}]$ to $\xfinal$. Note that if the previous solution is already locally optimal, the optimal solution to $\eqref{eq:mpc}$ is given by ($\ustar_k(\cdot), \taustar_k$), where $\ustar_k(t) = \bar{\bm{u}}_{k-1}(t), \; t \in [t_k, t_k + T]$ and $\taustar_k = t_k + T$. Otherwise, a time shift to connect to the previous solution might occur, which is defined as
\begin{equation}
\Delta t_k = \taustar_k - (t_k + T).
\end{equation}
Hence, a new optimized solution $\bar{\bm{u}}_k(\cdot)$ is available in the end of each RHP iteration and is given by 
\begin{equation} \label{eq:tot_control}
\bar{\bm{u}}_k(t) = \begin{cases}
\bar{\bm{u}}_{k-1}(t), &t \in [t_0, t_k) \\
\ustar_k(t) \in \mathcal{U},  &t \in [t_k, t_k + T) \\
\bar{\bm{u}}_{k-1}(t + \Delta t_k) , &t \in [t_k + T, \tfinalnom^{k-1} - \Delta t_k],
\end{cases}
\end{equation} 
where $\bar{\bm{u}}_{-1}(\cdot) = \bar{\bm{u}}(\cdot)$ which is the nominal control trajectory. Furthermore, the new terminal time is updated according to $\tfinalnom^k = \tfinalnom^{k-1} - \Delta t_k$ and the new optimized state trajectory $\bar{\bm{x}}_k(\cdot)$ is defined analogously as in \eqref{eq:tot_control}.

In order to be able to select the optimal choice of $\tau_k$, i.e., where to connect onto the terminal manifold given by $\bar{\bm{x}}_{k-1}(\cdot)$, a terminal cost $\Psi_k(\tau_k)$ is added that represents the cost to transfer the system from $\bar{\bm{x}}_{k-1}(\tau_k)$ to $\xfinal$ using the previously optimized solution. This cost-to-go function is given by
\begin{equation} \label{eq:term_cost}
\Psi_k (\tau_k) = \int_{\tau_k}^{\tfinalnom^{k-1}} \hspace{-1em} \ell (\bar{\bm{x}}_{k-1	}(t), \bar{\bm{u}}_{k-1}(t)) \mathrm{d}t,  \tau_k \in [\tinitial, \tfinalnom^{k-1}],
\end{equation} 
which represents an admissible overestimate of the optimal cost-to-go, obtained from the previous solution. 

\subsection{Feasibility, optimality and convergence} \label{sec:theory}
It will now be shown that the RHP problem in \eqref{eq:mpc} possesses the following properties: i) recursive feasibility, ii) the total objective function value will be non-increasing at every RHP iteration, and iii) convergence to the terminal state. The reasoning behind most of the results are inspired by stability analysis for nonlinear model predictive control (MPC)~\citep{mayne2000constrained}.

\begin{figure}
	\centering
	\setlength\figureheight{0.1667\textwidth}
	\setlength\figurewidth{0.3\textwidth}
	\hspace{2em}\definecolor{mycolor1}{rgb}{0.6510,0.8078,0.8902}%
\definecolor{mycolor2}{rgb}{0.1216,0.4706,0.7059}%
\definecolor{mycolor3}{rgb}{0.6980,0.8745,0.5412}%
\definecolor{mycolor4}{rgb}{0.2000,0.6275,0.1725}%
\definecolor{mycolor5}{rgb}{0.9843,0.6039,0.6000}%
\definecolor{mycolor6}{rgb}{0.8902,0.1020,0.1098}%
\definecolor{mycolor7}{rgb}{ 0.9922,0.7490,0.4353}%
\definecolor{mycolor8}{rgb}{1.0000,0.4980,0}%
\definecolor{mycolor9}{rgb}{ 0.7922,0.6980,0.8392}%
\definecolor{mycolor10}{rgb}{0.4157,0.2392,0.6039}%
\begin{tikzpicture}

\begin{axis}[%
hide axis,
width=0,
height=0,
at={(0,0)},
scale only axis,
xmin=0,
xmax=1,
ymin=0,
ymax=1,
axis background/.style={fill=white},
xmajorgrids,
ymajorgrids,
legend style={legend cell align=left,column sep=3pt, align=center},
legend columns=-1,
]
\addplot [color=mycolor2, line width=1.2pt]
table[row sep=crcr]{%
	1	0.0\\
};
\addlegendentry{ {\small $\bar{\bm{x}}_{k-1}(\cdot)$  } }

\addplot [color=mycolor4, line width=1.2pt]
table[row sep=crcr]{%
	1	0\\
};
\addlegendentry{ {\small $\bm{x}^{\star}_k(\cdot)$ } }

\addplot [color=mycolor8, line width=1.2pt, dashed]
table[row sep=crcr]{%
	1	0\\
};
\addlegendentry{ {\small $ \bar{\bm{x}}_{k-1}\left(\taustar_k : \tfinalnom^{k-1} \right) $} }

\end{axis}
\end{tikzpicture}%
	\input{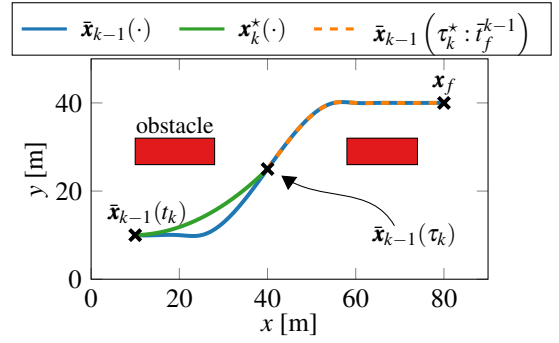} 
	\vspace{-2pt}
	\caption{\small An illustrative example of one RHP iteration. The problem in \eqref{eq:mpc} is solved from $\bar{\bm{x}}_{k-1}(t_k)$, which results in an optimal state trajectory (green). The previous solution $\bar{\bm{x}}_{k-1}(\cdot)$ (blue) is used to provide guarantees that a feasible trajectory to the terminal state exist beyond the receding planning horizon (dashed).   }
	\vspace{-3pt} 
\end{figure}

\begin{mylem}[\textbf{Recursive feasibility}] \label{lem:rec}
	\hfill \\Assume that the nominal trajectory $(\bar{\bm{x}}_{-1}(\cdot), \bar{\bm{u}}_{-1}(\cdot) )$ is feasible in~\eqref{eq:cctoc}. Then, at all RHP iterations $k$ satisfying $t_k + T \leq \tfinalnom^{k-1}$, there exists a feasible solution to \eqref{eq:cctoc}.
\end{mylem}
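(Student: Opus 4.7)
The plan is to establish recursive feasibility of the sliding-horizon subproblem in \eqref{eq:mpc} by induction on the iteration index $k$, with the key observation being that the previously stored trajectory $(\bar{\bm{x}}_{k-1}(\cdot), \bar{\bm{u}}_{k-1}(\cdot))$ itself supplies a candidate that satisfies every constraint in \eqref{eq:mpc} whenever $t_k + T \leq \tfinalnom^{k-1}$. Once this is in hand, the fact that any feasible solution of \eqref{eq:mpc} can be concatenated with the tail $(\bar{\bm{x}}_{k-1}, \bar{\bm{u}}_{k-1})$ on $[\taustar_k, \tfinalnom^{k-1}]$ via \eqref{eq:tot_control} yields a feasible solution to the original problem \eqref{eq:cctoc}.

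For the base case $k=0$, I would use the assumed nominal trajectory directly: set $\bm{u}_0(t) = \bar{\bm{u}}_{-1}(t)$ on $[t_0, t_0 + T]$ and the timing parameter $\tau_0 = t_0 + T$. Since by assumption $(\bar{\bm{x}}_{-1}, \bar{\bm{u}}_{-1})$ is feasible in \eqref{eq:cctoc}, the corresponding state trajectory satisfies the dynamics \eqref{eq:system}, the initial condition $\bm{x}_0(t_0) = \xinitial = \xcurr$, the state-space and control constraints $\bar{\bm{x}}_{-1}(t) \in \mathcal{X}_{\mathrm{free}}$ and $\bar{\bm{u}}_{-1}(t) \in \mathcal{U}$, and by construction the terminal constraint $\bm{x}_0(t_0 + T) = \bar{\bm{x}}_{-1}(\tau_0)$. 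The timing parameter lies in $[\tinitial, \tfinalnom^{-1}]$ by the standing hypothesis $t_0 + T \leq \tfinalnom^{-1}$.

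For the inductive step, assume \eqref{eq:mpc} admits a feasible solution at iteration $k-1$, so that the updated trajectory $(\bar{\bm{x}}_{k-1}(\cdot), \bar{\bm{u}}_{k-1}(\cdot))$ defined by \eqref{eq:tot_control} is feasible on $[\tinitial, \tfinalnom^{k-1}]$ with the updated terminal time $\tfinalnom^{k-1} = \tfinalnom^{k-2} - \Delta t_{k-1}$. At iteration $k$, I would propose the candidate $\bm{u}_k(t) := \bar{\bm{u}}_{k-1}(t)$ on $[t_k, t_k + T]$ together with $\tau_k := t_k + T$. The predicted initial condition matches $\xcurr = \bar{\bm{x}}_{k-1}(t_k)$ by definition, and integrating the dynamics gives $\bm{x}_k(\cdot) \equiv \bar{\bm{x}}_{k-1}(\cdot)$ on the window, so the terminal constraint $\bm{x}_k(t_k+T) = \bar{\bm{x}}_{k-1}(\tau_k)$ holds automatically. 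Feasibility of the path and input constraints is inherited from the inductive hypothesis, and the hypothesis $t_k + T \leq \tfinalnom^{k-1}$ is exactly what guarantees $\tau_k$ lies in the admissible interval $[\tinitial, \tfinalnom^{k-1}]$ required for the terminal manifold to be well defined.

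The argument is conceptually straightforward; the main subtlety lies not in any single verification but in being careful about which trajectory plays the role of the ``previous solution'' at each stage. In particular, one must track that after iteration $k-1$ the concatenation in \eqref{eq:tot_control} still satisfies the dynamics across the joint at $t_{k-1} + T$, because $\bar{\bm{x}}_{k-1}(\tcurr^{k-1}+T) = \bar{\bm{x}}_{k-2}(\taustar_{k-1})$ by the terminal constraint active at iteration $k-1$, and the time-shifted tail $\bar{\bm{u}}_{k-2}(\,\cdot + \Delta t_{k-1})$ then propagates the state consistently, with the new terminal time $\tfinalnom^{k-1} = \tfinalnom^{k-2} - \Delta t_{k-1}$ preserving membership in $\mathcal{X}_{\mathrm{free}}$ and $\mathcal{U}$. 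Once this bookkeeping is clean, applying the inductive hypothesis yields the desired feasible candidate for iteration $k$ and closes the induction.
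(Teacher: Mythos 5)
Your proposal is correct and follows essentially the same route as the paper's proof: exhibit the time-shifted previous solution with $\tau_k = t_k + T$ as a feasible candidate for \eqref{eq:mpc}, concatenate with the remaining tail via \eqref{eq:tot_control} to obtain a full-horizon feasible control law, and close by induction from the feasibility of $\bar{\bm{u}}_{-1}(\cdot)$ at iteration $0$. Your additional bookkeeping about continuity of the state across the joint at $t_{k-1}+T$ is a welcome elaboration of a step the paper leaves implicit, but it is not a different argument.
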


\begin{proof}
	Assume that $\bar{\bm{u}}_{k-1}(\cdot)$ is feasible in \eqref{eq:cctoc} at RHP iteration $k-1$. Then, at any RHP iteration $k$, \newline \mbox{$\forall k : t_k + T \leq \tfinalnom^{k-1}$}, one choice of feasible decision variables in \eqref{eq:mpc} is:
	\begin{equation} \label{eq:feas_init}
	\begin{aligned}
	\tau^i_k &= t_k + T, \\
	\bm{u}^i_k(t) &= \bar{\bm{u}}_{k-1}(t), \; t \in [t_k, t_k + T).
	\end{aligned}
	\end{equation}
	After solving~\eqref{eq:mpc}, an updated full horizon open-loop control law feasible in \eqref{eq:cctoc} at RHP iteration $k$ is obtained from~\eqref{eq:tot_control} as $\bar{\bm{u}}_{k}(\cdot)$. The desired result follows from induction by noting that at RHP iteration 0, $\bar{\bm{u}}_{-1}(\cdot)$ is feasible.
\end{proof}

\begin{mythm}[\textbf{Full horizon objective function value}] \label{thm:noninc}
	Assume that the nominal trajectory $(\bar{\bm{x}}_{-1}(\cdot), \bar{\bm{u}}_{-1}(\cdot) )$ is feasible in~\eqref{eq:cctoc}. Then, the result in the end of each RHP iteration $k$ satisfying $t_k + T \leq \tfinalnom^{k-1}$ is a full horizon open-loop control law $\bar{\bm{u}}_k(\cdot)$ that is feasible in \eqref{eq:cctoc} and satisfies 
	\begin{equation*}
	J_{\text{tot}}(\xinitial,\bar{\bm{u}}_k(\cdot)) \leq J_{\mathrm{tot}}(\xinitial,\bar{\bm{u}}_{k-1}(\cdot)) \leq \hspace{-2pt}\ldots\hspace{-2pt} \leq J_{\text{tot}}(\xinitial, \bar{\bm{u}}_{-1}(\cdot)).
	\end{equation*}
\end{mythm}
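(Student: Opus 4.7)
The plan is to combine Lemma~\ref{lem:rec} for the feasibility claim with a careful decomposition of $J_{\text{tot}}$ according to the three intervals in~\eqref{eq:tot_control} to obtain the one-step inequality between successive iterations; the full chain will then follow by induction starting from $k = 0$, where $\bar{\bm{u}}_{-1}(\cdot)$ is feasible by assumption. Feasibility of $\bar{\bm{u}}_k(\cdot)$ in~\eqref{eq:cctoc} is immediate from Lemma~\ref{lem:rec} together with~\eqref{eq:tot_control}, because each of the three segments is either a piece of the previously feasible $\bar{\bm{u}}_{k-1}(\cdot)$, a time-shifted copy of it, or the RHP optimizer, which already satisfies all dynamics, state, control and connection constraints of~\eqref{eq:mpc}. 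In particular, the tail is a replay ending at $\bar{\bm{x}}_{k-1}(\tfinalnom^{k-1}) = \xfinal$, so the new terminal condition is inherited.

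For the one-step inequality $J_{\text{tot}}(\xinitial, \bar{\bm{u}}_k(\cdot)) \le J_{\text{tot}}(\xinitial, \bar{\bm{u}}_{k-1}(\cdot))$, I would first split the left-hand side into three integrals mirroring~\eqref{eq:tot_control}: a past piece on $[t_0, t_k)$, a current piece on $[t_k, t_k + T)$, and a tail piece on $[t_k + T,\, \tfinalnom^{k-1} - \Delta t_k]$. The past piece coincides with the corresponding integral along $\bar{\bm{u}}_{k-1}(\cdot)$ by construction. For the tail, the substitution $s = t + \Delta t_k$, combined with the identity $\Delta t_k = \taustar_k - (t_k + T)$, rewrites it as $\int_{\taustar_k}^{\tfinalnom^{k-1}} \ell(\bar{\bm{x}}_{k-1}(s), \bar{\bm{u}}_{k-1}(s))\,\mathrm{d}s$, which by~\eqref{eq:term_cost} equals $\Psi_k(\taustar_k)$. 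Hence $J_{\text{tot}}(\xinitial, \bar{\bm{u}}_k(\cdot))$ collapses to the past-piece integral along $\bar{\bm{u}}_{k-1}(\cdot)$ plus the optimal value $J(\xcurr, \ustar_k(\cdot), \taustar_k)$ of~\eqref{eq:mpc}.

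To upper-bound the RHP optimum, I would invoke the warm-start candidate $(\bm{u}_k^i(\cdot), \tau_k^i)$ constructed in the proof of Lemma~\ref{lem:rec}: it is feasible in~\eqref{eq:mpc} with value $\Psi_k(t_k + T) + \int_{t_k}^{t_k+T} \ell(\bar{\bm{x}}_{k-1}(t), \bar{\bm{u}}_{k-1}(t))\,\mathrm{d}t$, which by~\eqref{eq:term_cost} telescopes to $\int_{t_k}^{\tfinalnom^{k-1}} \ell(\bar{\bm{x}}_{k-1}(t), \bar{\bm{u}}_{k-1}(t))\,\mathrm{d}t$. Optimality in~\eqref{eq:mpc} therefore yields $J(\xcurr, \ustar_k(\cdot), \taustar_k) \le \int_{t_k}^{\tfinalnom^{k-1}} \ell(\bar{\bm{x}}_{k-1}(t), \bar{\bm{u}}_{k-1}(t))\,\mathrm{d}t$, and adding the common past-piece integral recovers the desired one-step inequality. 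Applying this argument inductively from $k = 0$ produces the full chain stated in the theorem.

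The hardest part will be the bookkeeping around the time-shift $\Delta t_k$ (which may have either sign), in particular verifying that the change of variable in the tail integral matches exactly the definition of $\Psi_k(\taustar_k)$ in~\eqref{eq:term_cost}; once this cancellation is pinned down, the rest is assembly of already-established identities together with the optimality of $(\ustar_k, \taustar_k)$. Note that Assumption~\ref{ass:ell} is not actually required for this monotonicity step itself --- only the explicit form of the terminal cost in~\eqref{eq:term_cost} and the optimality in~\eqref{eq:mpc} are needed.
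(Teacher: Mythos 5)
Your proposal is correct and follows essentially the same route as the paper's proof: both decompose the full-horizon cost into the cost-to-come up to $t_k$ plus the RHP objective~\eqref{eq:mpc}, identify the warm-start candidate~\eqref{eq:feas_init} whose value telescopes via~\eqref{eq:term_cost} to the remaining cost of $\bar{\bm{u}}_{k-1}(\cdot)$, invoke optimality of $(\ustar_k(\cdot),\taustar_k)$, and close by induction. The only cosmetic difference is that you expand $J_{\text{tot}}(\xinitial,\bar{\bm{u}}_k(\cdot))$ and bound it from above, whereas the paper expands $J_{\text{tot}}(\xinitial,\bar{\bm{u}}_{k-1}(\cdot))$ and bounds it from below; your closing observation that Assumption~\ref{ass:ell} is not needed matches the paper's remark.
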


\begin{proof}
	From Lemma~\ref{lem:rec}, it is known that $\bar{\bm{u}}_{k-1}(\cdot)$ is feasible in $\eqref{eq:cctoc}$. Furthermore, the objective function value is $J_{\text{tot}}(\xinitial,\bar{\bm{u}}_{k-1}(\cdot))$, which can be equivalently expanded as
	\begin{equation} \label{eq:obj_val}
	\begin{aligned}
	J_{\text{tot}}(\xinitial, \bar{\bm{u}}_{k-1}(\cdot)) &= \Psi_{ctc}(t_{k-1}) \\ &+ J(\bar{\bm{x}}_{k-1}(t_{k-1}),\bm{u}^{\star}_{k-1}(\cdot), \taustar_{k-1}),
	\end{aligned}
	\end{equation}
	where $\Psi_{ctc}(t) $ is the cost-to-come function, i.e., the accumulated cost up until $t$, with $\Psi_{ctc}(\tinitial) = 0$, while $J$ and ($\ustar_{k-1}(\cdot), \taustar_{k-1}$) are the objective function and the solution to $\eqref{eq:mpc}$ at RHP iteration $k-1$, respectively. By using \eqref{eq:mpc}, \eqref{eq:tot_control}, \eqref{eq:term_cost}, \eqref{eq:feas_init} in \eqref{eq:obj_val}, it follows that
	\begin{equation}
	\begin{aligned}
	&\hspace{-0.5em}J_{\text{tot}}(\xinitial, \bar{\bm{u}}_{k-1}(\cdot)) = \\
	&\hspace{-0.5em}\underbrace{\Psi_{ctc}(t_{k-1}) + \int_{t_{k-1}}^{t_{k}} \hspace{-1em} \ell(\bar{\bm{x}}_{k-1}(t),\bar{\bm{u}}_{k-1}(t)) \mathrm{d}t}_{\Psi_{ctc}(t_k)} \\ 
	&\hspace{-0.5em}+\underbrace{\int_{t_{k}}^{t_{k}+T} \hspace{-1.5em}\ell(\bar{\bm{x}}_{k-1}(t),\bar{\bm{u}}_{k-1}(t)) \mathrm{d}t + \Psi_k(t_k + T)}_{\text{Using \eqref{eq:feas_init} in \eqref{eq:mpc} : }J(\bar{\bm{x}}_k(t_k),\bm{u}^i_k(\cdot), \tau^i_k)} = \\ 
	&\hspace{-0.5em}\Psi_{ctc}(t_k) + J(\bar{\bm{x}}_k(t_k),\bm{u}^i_k(\cdot), \tau^i_k) \geq \\
	&\hspace{-0.5em}\Psi_{ctc}(t_k) + J(\bar{\bm{x}}_k(t_k),\bm{u}^{\star}_k(\cdot), \taustar_k) = J_{\text{tot}}(\xinitial, \bar{\bm{u}}_k(\cdot)). \\
	\end{aligned}
	\end{equation}
	Thus, using induction, it is possible to conclude that:
	\begin{equation*}
	\begin{aligned}
	J_{\text{tot}}(\xinitial,\bar{\bm{u}}_k(\cdot)) \leq J_{\mathrm{tot}}(\xinitial,\bar{\bm{u}}_{k-1}(\cdot)) \leq \hspace{-2pt}\ldots\hspace{-2pt} \leq J_{\text{tot}}(\xinitial, \bar{\bm{u}}_{-1}(\cdot)).
	\end{aligned}
	\end{equation*}
	which holds $\forall k : t_k + T \leq \tfinalnom^{k-1}$. When $t_k + T > \tfinalnom^{k-1}$, an optimal solution within the current planning horizon already exists and no re-planning is required.	
\end{proof}

\begin{myrem}
	Note that Assumption~\ref{ass:ell} on the cost function $\ell(\bm{x},\bm{u})$ is not required in Lemma~\ref{lem:rec} nor in Theorem~\ref{thm:noninc}.	
\end{myrem}

\begin{myrem} \label{rem:poo}
	When $t_k + T > \tfinalnom^{k-1}$, one possibility is to perform re-planning by iteratively decreasing the planning horizon $T$. However, the optimal solution will stay the same during these last $T/\delta$ RHP iterations using arguments from principle of optimality.
\end{myrem}

\begin{mythm}[\textbf{Finite number of RHP iterations}] \label{thm:finite}
	\hfill \\ Under Assumption~\ref{ass:ell}, the maximum number of RHP iterations $k_{\text{max}}$ is upper bounded by 
	\begin{equation} \label{eq:kmax}
	k_{\text{max}} \leq \frac{J_{\text{tot}}(\xinitial, \bar{\bm{u}}_{-1}(\cdot))}{\varepsilon \delta},
	\end{equation}
	where $\delta$ is the time between two consecutive RHP iterations.
\end{mythm}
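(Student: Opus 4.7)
The plan is to chain together the stopping condition for the RHP iterations, the positive lower bound on the stage cost from Assumption~\ref{ass:ell}, and the monotone non-increase of the full horizon cost established in Theorem~\ref{thm:noninc}.

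First I would fix an arbitrary RHP iteration $k$ that actually executes. By the standing assumption used throughout Lemma~\ref{lem:rec} and Theorem~\ref{thm:noninc}, the iteration is carried out only when $t_k + T \leq \tfinalnom^{k-1}$. Since $t_k = t_0 + k\delta$, this immediately yields the purely temporal inequality
\begin{equation*}
\tfinalnom^{k-1} - t_0 \;\geq\; t_k + T - t_0 \;=\; k\delta + T \;\geq\; k\delta.
\end{equation*}

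Next, I would translate this temporal bound into a lower bound on the full horizon cost of the previous iterate. Using Assumption~\ref{ass:ell}, which states $\ell(\bm{x},\bm{u}) \geq \varepsilon > 0$ pointwise on $\mathcal{X} \times \mathcal{U}$, integration along the feasible trajectory $(\bar{\bm{x}}_{k-1}(\cdot),\bar{\bm{u}}_{k-1}(\cdot))$ gives
\begin{equation*}
J_{\text{tot}}(\xinitial, \bar{\bm{u}}_{k-1}(\cdot)) \;=\; \int_{t_0}^{\tfinalnom^{k-1}} \hspace{-1em}\ell(\bar{\bm{x}}_{k-1}(t),\bar{\bm{u}}_{k-1}(t))\,\mathrm{d}t \;\geq\; \varepsilon\bigl(\tfinalnom^{k-1} - t_0\bigr) \;\geq\; \varepsilon k \delta.
\end{equation*}

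Finally I would invoke Theorem~\ref{thm:noninc} to dominate the left-hand side by the nominal cost: $J_{\text{tot}}(\xinitial,\bar{\bm{u}}_{k-1}(\cdot)) \leq J_{\text{tot}}(\xinitial,\bar{\bm{u}}_{-1}(\cdot))$. Combining the two inequalities and solving for $k$ yields $k \leq J_{\text{tot}}(\xinitial, \bar{\bm{u}}_{-1}(\cdot))/(\varepsilon\delta)$; since this holds for every executed iteration, it holds in particular for $k_{\text{max}}$, giving \eqref{eq:kmax}. There is no real obstacle here: the only subtlety is remembering that the bound $\ell \geq \varepsilon$ is exactly what upgrades the qualitative monotonicity of Theorem~\ref{thm:noninc} into a quantitative rate, and that the stopping criterion $t_k + T \leq \tfinalnom^{k-1}$ is what links the index $k$ to the physical horizon that the cost lower bound controls.
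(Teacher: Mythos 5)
Your proof is correct and follows essentially the same route as the paper: both combine the pointwise bound $\ell \geq \varepsilon$ from Assumption~\ref{ass:ell}, integrated over a time interval of length at least $k\delta$, with the monotonicity of Theorem~\ref{thm:noninc} to obtain $\varepsilon\delta k \leq J_{\text{tot}}(\xinitial,\bar{\bm{u}}_{-1}(\cdot))$. The only cosmetic difference is that the paper extracts the factor $\varepsilon\delta k$ from the cost-to-come $\Psi_{ctc}(t_k)$ accumulated over the elapsed interval $[\tinitial, t_k]$, whereas you extract it from the full horizon $[\tinitial, \tfinalnom^{k-1}]$ using the execution condition $t_k + T \leq \tfinalnom^{k-1}$; both yield the same bound.
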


\begin{proof}
	At RHP iteration $k$, Assumption~\ref{ass:ell} and \eqref{eq:obj_val} give
	\begin{equation} \label{eq:lower_bound}
	\begin{aligned}
	&J_{\text{tot}}(\xinitial, \bar{\bm{u}}_k(\cdot)) \geq \Psi_{ctc}(t_k) = \\
	&\int_{\tinitial}^{\tinitial+\delta k} \hspace{-1em} \underbrace{\ell(\bar{\bm{x}}_k(t), \bar{\bm{u}}_k(t) )}_{\geq \varepsilon} \mathrm{d}t \geq  \varepsilon \delta k.
	\end{aligned}		 
	\end{equation}
	From Theorem~\ref{thm:noninc}, it holds that 
	\begin{equation*}
	J_{\text{tot}}(\xinitial, \bar{\bm{u}}_k(\cdot)) \leq J_{\text{tot}}(\xinitial, \bar{\bm{u}}_{-1}(\cdot) ), \forall k : \; t_k + T \leq \tfinalnom^{k-1}
	\end{equation*}
	which combined with \eqref{eq:lower_bound} gives
	\begin{equation}
	\varepsilon \delta k \leq J_{\text{tot}}(\xinitial, \bar{\bm{u}}_{-1}(\cdot)) \iff k \leq \frac{J_{\text{tot}}(\xinitial, \bar{\bm{u}}_{-1}(\cdot))}{\varepsilon\delta },
	\end{equation}
	which completes the proof. 
\end{proof}

\begin{mycor}[\textbf{Convergence to terminal state}]
	\hfill \\ Under Assumption~\ref{ass:ell}, the terminal state $\xfinal$ will be reached in finite time. 
\end{mycor}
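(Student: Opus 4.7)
The plan is to combine Theorem~\ref{thm:finite} (which bounds the number of RHP iterations) with Theorem~\ref{thm:noninc} (monotone decrease of the full-horizon cost) and the positive lower bound $\varepsilon$ on the stage cost from Assumption~\ref{ass:ell}, to argue that both the planning phase and the remaining execution phase conclude in finite physical time.

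First I would observe that, by Theorem~\ref{thm:finite}, the number of RHP iterations is bounded above by the finite quantity $k_{\text{max}} \leq J_{\text{tot}}(\xinitial,\bar{\bm{u}}_{-1}(\cdot))/(\varepsilon\delta)$; note that $J_{\text{tot}}(\xinitial,\bar{\bm{u}}_{-1}(\cdot))$ is finite because $(\bar{\bm{x}}_{-1}(\cdot),\bar{\bm{u}}_{-1}(\cdot))$ is a feasible trajectory of \eqref{eq:cctoc} defined on the finite interval $[\tinitial,\tfinalnom]$ with continuous $\ell$. Hence there exists a finite iteration index $k_{\text{stop}}\le k_{\text{max}}$ at which either $t_{k_{\text{stop}}}+T>\tfinalnom^{k_{\text{stop}}-1}$ or the planning horizon is gradually reduced according to Remark~\ref{rem:poo}; in either case no further RHP replanning occurs, and from $t_{k_{\text{stop}}}$ onwards the system simply follows the last computed full-horizon open-loop trajectory $\bar{\bm{u}}_{k_{\text{stop}}}(\cdot)$, which by Lemma~\ref{lem:rec} is feasible in~\eqref{eq:cctoc} and therefore drives the state to $\xfinal$ at time $\tfinalnom^{k_{\text{stop}}}$.

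Second, I would show that $\tfinalnom^{k_{\text{stop}}}$ itself is finite. Using Assumption~\ref{ass:ell} together with the representation of $J_{\text{tot}}$ as an integral of $\ell$ on $[\tinitial,\tfinalnom^{k_{\text{stop}}}]$ gives
\begin{equation*}
\varepsilon\bigl(\tfinalnom^{k_{\text{stop}}}-\tinitial\bigr) \leq \int_{\tinitial}^{\tfinalnom^{k_{\text{stop}}}} \ell(\bar{\bm{x}}_{k_{\text{stop}}}(t),\bar{\bm{u}}_{k_{\text{stop}}}(t))\,\mathrm{d}t = J_{\text{tot}}(\xinitial,\bar{\bm{u}}_{k_{\text{stop}}}(\cdot)).
\end{equation*}
Applying Theorem~\ref{thm:noninc} to the right-hand side yields $\tfinalnom^{k_{\text{stop}}} \leq \tinitial + J_{\text{tot}}(\xinitial,\bar{\bm{u}}_{-1}(\cdot))/\varepsilon < \infty$, which establishes that $\xfinal$ is reached in finite time.

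I do not anticipate a serious obstacle: the whole point of Assumption~\ref{ass:ell} is precisely to convert finite accumulated cost into finite elapsed time, so once the iteration counter is known to be finite (Theorem~\ref{thm:finite}) and the final planned trajectory is known to have finite cost (Theorem~\ref{thm:noninc}), convergence in finite time is immediate. The only mild care needed is handling the terminal phase described in Remark~\ref{rem:poo}, where the planning horizon may be shrunk until $T\leq\delta$; the principle-of-optimality argument quoted there ensures that no additional optimization is performed and execution proceeds along the already-feasible tail of $\bar{\bm{u}}_{k_{\text{stop}}}(\cdot)$.
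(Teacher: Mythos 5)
Your proposal is correct and rests on the same pillar as the paper's proof, namely Theorem~\ref{thm:finite}, but it closes the argument differently. The paper's proof is a one-liner: after the last RHP iteration, which occurs no later than $\tinitial + \delta k_{\text{max}}$, re-planning stops because $t_k + T > \tfinalnom^{k-1}$, so the remaining execution time is at most the horizon length $T$, giving $\tfinal \leq \tinitial + \delta k_{\text{max}} + T$. You instead bound the duration of the \emph{entire} final trajectory directly from its cost, using $\varepsilon(\tfinalnom^{k}-\tinitial) \leq J_{\text{tot}}(\xinitial,\bar{\bm{u}}_{k}(\cdot)) \leq J_{\text{tot}}(\xinitial,\bar{\bm{u}}_{-1}(\cdot))$ via Assumption~\ref{ass:ell} and Theorem~\ref{thm:noninc}, which yields $\tfinal \leq \tinitial + J_{\text{tot}}(\xinitial,\bar{\bm{u}}_{-1}(\cdot))/\varepsilon$. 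The two bounds are of the same order (the paper's is just yours plus $T$, given \eqref{eq:kmax}), but yours is slightly more self-contained: it does not rely on the termination condition $t_k + T > \tfinalnom^{k-1}$ and explicitly verifies that the tail of the last planned trajectory has finite duration, a step the paper leaves implicit. Your additional remarks on the finiteness of $J_{\text{tot}}(\xinitial,\bar{\bm{u}}_{-1}(\cdot))$ and on the terminal phase of Remark~\ref{rem:poo} are correct and fill in details the paper glosses over; no gap remains.
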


\begin{proof}
	Using Theorem~\ref{thm:finite}, the terminal time $\tfinal$ when the terminal state $\xfinal$ is reached is upper bounded by
	\begin{equation}
	\tfinal \leq \tinitial + \delta k_{\text{max}} + T,
	\end{equation}
	where $k_{\text{max}}$ is upper bounded in~\eqref{eq:kmax} and $T$ is the user-defined RHP horizon length in~\eqref{eq:mpc}. 
\end{proof}

\section{A practical algorithm} \label{sec:alg}

In this section, a reformulation of the RHP problem in the previous section is introduced to handle a piecewise continuous nominal control trajectory. The new formulation is connected to the theory in Section~\ref{sec:rhi} to show that recursive feasibility, non-increasing objective function value and convergence to the terminal state still can be guaranteed. Finally, an algorithm is outlined which summarizes all steps in the proposed RHP approach.

\subsection{Solving the receding horizon planning problem} \label{sec:problems}
A common approach to solve OCPs such as the RHP problem in \eqref{eq:mpc} is to use direct methods for optimal control. In these methods, the continuous problem is discretized and cast as a standard NLP. This is typically achieved by using a piecewise continuous control signal~\citep{diehl2006fast}. The discretized problem can then be solved using standard methods for nonlinear optimization such as SQP or nonlinear interior point methods~\citep{nocedal2006numerical}. These solvers can be interfaced through a standard solver interface such as CasADi~\citep{andersson2018casadi}, which can be used when all involved functions in~\eqref{eq:mpc} are (at least) continuously differentiable everywhere. 

In practice, it is desirable to use nominal trajectories in \eqref{eq:mpc} where the control signal is piecewise continuous. As an example, this is the case when a lattice-based motion planner is used to compute a nominal trajectory using motion primitives computed by applying direct optimal control techniques~\citep{bergman2019improved}. The problem of using a piecewise continuous nominal control signal trajectory is that the terminal manifold, defined by $\bar{\bm{x}}_{k-1}(\tau)$, and the cost-to-go function $\Psi_k(\tau)$ in \eqref{eq:mpc} are piecewise continuously differentiable with respect to the timing variable $\tau$. This follows from that
\begin{equation}
\begin{aligned}
\frac{\mathrm{d}\bar{\bm{x}}_{k-1}}{\mathrm{d}\tau} &= \dot{\bar{\bm{x}}}_{k-1}(\tau) = f(\bar{\bm{x}}_{k-1}(\tau), \bar{\bm{u}}_{k-1}(\tau)),  \\
\frac{\mathrm{d}\Psi_k}{\mathrm{d}\tau} &= - \ell(\bar{\bm{x}}_{k-1}(\tau) , \bar{\bm{u}}_{k-1}(\tau) ),
\end{aligned}
\end{equation}
explicitly depend on the piecewise continuous control signal trajectory $\bar{\bm{u}}_{k-1}(\tau)$. Hence, in this case it is not possible to directly use standard solver interfaces. One possibility is to modify the solver and/or solver interface, which is out of scope in this work. Another possibility, which is used in this paper and will further be described in the next sections, is to adjust the problem formulation while aiming at preserving the theoretical guarantees proved in Section~\ref{sec:theory}.

\subsection{Adjusted receding horizon planning formulation} \label{sec:rvmpc}
One approach to deal with a piecewise continuous nominal control trajectory is to use a variable horizon length $T_k$ in each RHP iteration, and select the value of the timing parameter $\tau_k$ in \eqref{eq:mpc} in a separate step. This means that the RHP problem in \eqref{eq:mpc} can be reformulated as:
\begin{equation}
\minimize{ J = \int_{\tcurr}^{\tcurr+T_{k}} \ell (\bm{x}_k(t), \bm{u}_k(t)) \mathrm{d}t}{\bm{u}_k(\cdot), \;T_k}{&\bm{x}_k(\tcurr) = \xcurr, \\ &
	\bm{x}_k(\tcurr + T_k) = \bar{\bm{x}}_{k-1}(\tau_k)  \\ &\dot{\bm{x}}_k (t) = f(\bm{x}_k(t),\bm{u}_k(t)),  \\ &\bm{x}_k(t) \in \mathcal{X}_{\mathrm{free}}, 
	&& \hspace{-10ex} \bm{u}_k(t) \in \mathcal{U}. } \label{eq:mpc_variable_T}
\end{equation}
Here, the difference compared to \eqref{eq:mpc} is that $T_k$ is added as a decision variable, and $\tau _k$ is removed from being a decision variable and is instead considered as a parameter to the RHP problem. Since $\tau_k$ is no longer a decision variable, it is not an issue with using piecewise continuously differentiable functions $\bar{\bm{x}}_{k-1}(\cdot)$ and $\Psi_k(\cdot)$. This new problem formulation reduces the terminal state manifold to a single state. Furthermore, the cost-to-go function $\Psi_k(\cdot)$ does not need to be explicitly taken into account since the terminal state, and hence also the cost along the remaining nominal solution, is already selected before \eqref{eq:mpc_variable_T} is solved. By assuming a piecewise continuous input over each planning interval $[t_k, t_{k+1}]$, the problem can thus be discretized using direct optimal control methods and solved using standard NLP interfaces.  

\subsection{Feasibility, optimality and convergence}

The theoretical results in Section~\ref{sec:theory} neglected that the RHP problem is to be discretized when solved using direct optimal control techniques. This discretization introduces the possibility of loosing recursive feasibility (in contrast to the theoretical setup in Lemma~\ref{lem:rec}) since it is not guaranteed that the time-shifted input in~\eqref{eq:feas_init} is possible to represent in the discretized version. Even if the problem turns out to be feasible, it could be the case that Theorem~\ref{thm:noninc} does not hold, i.e., the new solution has a higher objective function value than the previously optimized solution. Here, we show how to obtain a practical implementation with the properties already guaranteed for the somewhat simplified theoretical setup in Section~\ref{sec:rhi}.

At RHP iteration $k-1$, $(\bar{\bm{x}}_{k-1}(t),\bar{\bm{u}}_{k-1}(t))$ is executed during the time interval $t \in [t_{k-1}, t_{k}]$. Since both model errors and external disturbances are assumed to be zero, the state at $t_{k}$ will be $\bar{\bm{x}}_{k-1}(t_{k})$. By setting $\xcurr = \bar{\bm{x}}_{k-1}(t_{k})$ and a desired value of $\tau_{k}$ in \eqref{eq:mpc_variable_T}, the solution at RHP iteration $k$ (if any exists) will be given by ($\ustar_{k}(\cdot), \tstar_{k})$. If the problem is feasible, a new candidate nominal control is to use:
\begin{equation} \label{eq:new_cand}
\bar{\bm{u}}_{\text{can}}(t) = \begin{cases}\bar{\bm{u}}_{k-1}(t), & \hspace{-0.7em}t \in [t_0, t_k)  \\\ustar_{k}(t), &\hspace{-0.7em}t \in [t_{k}, t_{k}+\tstar_k) \\ \bar{\bm{u}}_{k-1}(t+\Delta t_{k}), & \hspace{-0.7em}t \in [t_{k} + \tstar_k, \tfinalnom^{\text{can}} ] \end{cases}
\end{equation}
where $\Delta t_{k} = \tau_{k} - (t_{k} + \tstar_{k})$. In order to guarantee a result similar to Theorem~\ref{thm:noninc}, the candidate solution is explicitly benchmarked against the old one $\bar{\bm{u}}_{k-1}(\cdot)$. If the total objective function value is improved by using the new candidate, i.e.
\begin{equation} \label{eq:cand_comp}
J_{\text{tot}}(\xinitial,\bar{\bm{u}}_{\text{can}}(\cdot)) < J_{\text{tot}}(\xinitial,\bar{\bm{u}}_{k-1}(\cdot)) 
\end{equation}
the nominal trajectory is updated:
\begin{equation}
\left(\bar{\bm{u}}_k(\cdot), \bar{\bm{x}}_k(\cdot), \tfinalnom^k \right)   = \left( \bar{\bm{u}}_{\text{can}}(\cdot), \; \bar{\bm{x}}_{\text{can}}(\cdot), \; \tfinalnom^{k-1}  - \Delta t_{k} \right),
\end{equation}
where $\bar{\bm{x}}_{\text{can}}(\cdot)$ can be computed analogously to $\bar{\bm{u}}_{\text{can}}(\cdot)$ in \eqref{eq:new_cand}. Otherwise, the previously optimized solution $\bar{\bm{u}}_{k-1}(\cdot)$ is reused, which still represents a feasible solution to $\xfinal$.  Hence, a practically useful approach that provides similar guarantees as in Lemma~\ref{lem:rec} and Theorem~\ref{thm:noninc} is obtained using~\eqref{eq:new_cand} and \eqref{eq:cand_comp}. Another required property is to ensure that the approach converges to the terminal state $\xfinal$. Since the timing variable $\tau _k$ is updated before and kept fixed during each RHP iteration (as described in Section~\ref{sec:rvmpc}), progress towards $\tfinalnom^{k-1}$ is required for convergence. A sufficient condition for progress is
\begin{equation} \label{eq:update_policy}
\tau_{k+1} \geq \tau_k + \varepsilon_\tau,
\end{equation} 
which means that $\tau_k = \tfinalnom^{k-1} < \infty$ will be selected after a finite number of RHP iterations, implying that $\xfinal$ is used as terminal state in \eqref{eq:mpc_variable_T} and hence eventually reached.  
\subsection{Algorithm}

\begin{algorithm}[t]
	\caption{Receding horizon planning }
	\label{alg:rhi}
	\begin{algorithmic}[1]
		\State \textbf{Input}: $\xinitial, \xfinal$, $T$, $\delta$, $\mathcal{X}_{\text{free}}$
		\State ($\bar{\bm{x}}_{-1}, \bar{\bm{u}}_{-1}, \tfinalnom) \leftarrow$ Motion planner($\xinitial, \xfinal, \mathcal{X}_{\text{free}}$)
		\State $\tau_0 \leftarrow t_0 + T$, \quad $T_0^{\text{init} } \leftarrow \tau_0 - t_0$ 
		\State ($\bm{x}^{\text{init}}_0, \bm{u}^{\text{init}}_0)  \leftarrow $ resample($\bar{\bm{u}}_{-1}, \bar{\bm{x}}_{-1}, \delta $ )
		\While{$\tau_k \neq \tau_ {k-1}$ }
		\State Set $\xcurr = \bar{\bm{x}}_{k-1}(t_k)$ in \eqref{eq:mpc_variable_T} 
		\State ($\ustar_k, \tstar_k) \leftarrow $ Solve \eqref{eq:mpc_variable_T} using $\bm{u}^{\text{init}}_k, \bm{x}^{\text{init}}_k , T_k^{\text{init}}$ and $\tau_k$
		\If{$J(\xcurr, \ustar_k, \tstar_k) < \infty$}
		\State $\Delta t_k \leftarrow \tau_k - (t_k + \tstar_k)$
		\State ($\bar{\bm{u}}_{\text{can}}, \bar{\bm{x}}_{\text{can}}) \leftarrow$ get\_cand($\bar{\bm{u}}_{k-1}, \bar{\bm{x}}_{k-1}, \ustar_k, \Delta t_k $)
		\If{$J_{\text{tot}}(\xinitial,\bar{\bm{u}}_{\text{can}}) < J_{\text{tot}}(\xinitial,\bar{\bm{u}}) $}
		\State Update solution:
		
		\hspace{5ex} ($\bar{\bm{u}}_k, \bar{\bm{x}}_k) \leftarrow (\bar{\bm{u}}_{\text{can}}, \bar{\bm{x}}_{\text{can}})$
		
		\hspace{6ex}$\tfinalnom^k \leftarrow \tfinalnom^{k-1} - \Delta t_k$
		\Else{}
		\State ($\bar{\bm{u}}_k, \bar{\bm{x}}_k,\tfinalnom^k) \leftarrow (\bar{\bm{u}}_{k-1}, \bar{\bm{x}}_{k-1}, \tfinalnom^{k-1})$ 
		\EndIf
		\Else{}
		\State ($\bar{\bm{u}}_k, \bar{\bm{x}}_k, \tfinalnom^k) \leftarrow (\bar{\bm{u}}_{k-1}, \bar{\bm{x}}_{k-1}, \tfinalnom^{k-1})$
		\EndIf
		\State Send nominal trajectory to controller :
		
		send\_reference($\bar{\bm{u}}_k, \bar{\bm{x}}_k$) 
		\State Update receding horizon terminal constraint:
		
		$\tau_{k+1} \leftarrow $ update\_timing($t_{k+1}, T, \tfinalnom^k$)
		\State Initialization for next iteration:
		
		$T_{k+1}^{\text{init}} \leftarrow \tau_{k+1} - t_{k+1}$ 
		
		$\bm{x}^{\text{init}}_{k+1}, \bm{u}^{\text{init}}_{k+1} \leftarrow $ resample($\bar{\bm{u}}_k, \bar{\bm{x}}_k, T^{\text{init}}_{k+1}/N $ )
		\State Set $ k \rightarrow k+1$
		\EndWhile
		
	\end{algorithmic}
\end{algorithm}
The resulting RHP algorithm for motion planning is outlined in Algorithm~\ref{alg:rhi}. Before explaining the steps, note that state and control signal trajectories, i.e. $\bm{x}(\cdot)$ and $\bm{u}(\cdot)$ in Algorithm~\ref{alg:rhi}, are written as $\bm{x}$ and $\bm{u}$ for notational brevity. 

The inputs to the algorithm are given by the initial and terminal states, a desired planning horizon $T$, the time between two consecutive RHP iterations  $\delta$ (which together define the number of discretization points $N = T/\delta)$, and the current representation of $\mathcal{X}_{\text{free}}$. A motion planner is then used on Line 2 to compute a nominal trajectory. To obtain the best overall performance, the nominal trajectory should also be computed while minimizing the same objective function value as in~\eqref{eq:cctoc}~\citep{bergman2019bimproved}, since the RHP iterations only perform local improvements of the nominal trajectory. 

For each RHP iteration $k$, the problem in \eqref{eq:mpc_variable_T} is solved from $\xcurr = \bar{\bm{x}}_{k-1}(t_k)$ starting from a provided initialization (discussed further down in this section) and a selected value of $\tau_k$. If this problem is feasible, a new candidate solution is found using \eqref{eq:new_cand}. If this candidate has a lower full horizon objective function value (i.e. the inequality in \eqref{eq:cand_comp} holds), the current candidate is selected as solution. Otherwise, the previous solution is reused. The selected solution is sent on Line 19 to a trajectory-tracking controller.

The timing variable $\tau_k$ is updated at Line 20 in Algorithm~\ref{alg:rhi}. The result in \eqref{eq:update_policy} only requires an update policy such that $\tau_{k+1} \geq \tau_k + \varepsilon_\tau$. One policy that satisfies this requirement is:
\begin{equation}
\tau_{k+1} = \min \left(\tfinalnom^k, t_{k+1} + T \right),
\end{equation}
since $t_{k+1}+T = \tau_k + \delta$. This means that the terminal state at the next RHP iteration is selected using the user-defined desired planning horizon $T$ in Algorithm~\ref{alg:rhi}.

Finally, the solver initialization for the next RHP iteration is done on Line 21 in Algorithm~\ref{alg:rhi}. First, $T_k$ is initialized according to the predicted length, i.e., $T^{\text{init}}_{k+1} = \tau_{k+1} - t_{k+1}$. Then, the previous full horizon solution is resampled to be compatible with $T^{\text{init}}_{k+1}$. Assuming a piecewise constant control signal and a multiple-shooting discretization strategy, one possible resampling of $(\bar{\bm{x}}_k(\cdot), \bar{\bm{u}}_k(\cdot) )$ is
\begin{equation} \label{eq:init}
\begin{aligned}
\bm{u}_{k+1}^{\text{init}}(t_j) &= \bar{\bm{u}}_k(t_j), \quad \forall j \in [k+1, k+1+N], \\
\bm{x}_{k+1}^{\text{init}}(t_j) &= \bar{\bm{x}}_k(t_j), \quad \forall j \in [k+1,k+2+N],
\end{aligned}
\end{equation}
where $N$ represents the number of discretization points (given by $T/\delta$), and $t_j = \tinitial + j \delta^{\text{init}}$, with $\delta^{\text{init}} = T^{\text{init}}_{k+1}/N$.
The RHP iterations are solved until $\tau_{k} = \tau_{k-1}$, which means that $\xfinal$ has been used as terminal state in~\eqref{eq:mpc_variable_T}.

\section{Simulation study} \label{sec:Res}
In this section, the proposed optimization-based RHP approach presented in Section~\ref{sec:alg} is evaluated in two challenging parking problem scenarios for a truck and trailer system. To evaluate the proposed RHP approach, a lattice-based motion planning algorithm is employed in a first step to compute nominal trajectories using a library of precomputed motion primitives. The lattice-based planner is implemented in C++, while the optimization-based RHP approach is implemented in Python using CasADi together with the warm-start friendly SQP solver WORHP~\citep{bueskens2013worhp}.
\vspace*{-0.5em}
\subsection{Vehicle model}
\vspace*{-0.5em}
The truck and trailer system is a general 2-trailer with car-like truck~\citep{altafini2002hybrid,ljungqvist2019path}. The system consists of three vehicle segments: a car-like truck, a dolly and a semitrailer. The state vector for the system is given by 
\begin{equation} \label{eq:states}
\begin{aligned}
\bm{x} &= 
\begin{bmatrix}
\bm{q}^T & \alpha & \omega & v_1&  a_1
\end{bmatrix}^T \\
\bm{q} &= \begin{bmatrix}
x_3 & y_3 & \theta_3 & \beta_3 & \beta_2
\end{bmatrix}^T 
\end{aligned}	
\end{equation}
where $(x_3, y_3)$ and $\theta _3$ represent the position and orientation of the semitrailer, respectively, while $\beta _3$ and $\beta_2$ denote the joint angles between the semitrailer and the truck. Finally, $\alpha$ and $\omega$ are the truck's steering angle and steering angle rate, respectively, while $v_1$ and $a_1$ are the longitudinal velocity and acceleration of the truck. Assuming low-speed maneuvers, the truck and trailer system can compactly be modeled as~\citep{ljungqvist2019path}:
\begin{equation} \label{eq:truckModel}
\begin{aligned}
&\dot{\bm{q}} = v_1 f(\bm{q}, \alpha), \\
& \dot{\alpha} =  \omega, \quad \dot{\omega} = u_{\omega},  \\
& \dot{v}_1  = a_1  \quad  \dot{a}_1 = u_a. \\
\end{aligned}
\end{equation}
\begin{figure*}[t!] 
	\hspace{3.45em} \vspace{-2.8em}\definecolor{mycolor1}{rgb}{0.6510,0.8078,0.8902}%
\definecolor{mycolor2}{rgb}{0.1216,0.4706,0.7059}%
\definecolor{mycolor3}{rgb}{0.6980,0.8745,0.5412}%
\definecolor{mycolor4}{rgb}{0.2000,0.6275,0.1725}%
\definecolor{mycolor5}{rgb}{0.9843,0.6039,0.6000}%
\definecolor{mycolor6}{rgb}{0.8902,0.1020,0.1098}%
\definecolor{mycolor7}{rgb}{ 0.9922,0.7490,0.4353}%
\definecolor{mycolor8}{rgb}{1.0000,0.4980,0}%
\definecolor{mycolor9}{rgb}{ 0.7922,0.6980,0.8392}%
\definecolor{mycolor10}{rgb}{0.4157,0.2392,0.6039}%
\begin{tikzpicture}

\begin{axis}[%
hide axis,
width=0,
height=0,
at={(0,0)},
scale only axis,
xmin=0,
xmax=1,
ymin=0,
ymax=1,
axis background/.style={fill=white},
xmajorgrids,
ymajorgrids,
legend style={legend cell align=left,column sep=3pt, align=center},
legend columns=-1,
]
\addplot [color=mycolor2, dashdotted, line width=1.4pt]
table[row sep=crcr]{%
	1	0.0\\
};
\addlegendentry{ {\small Nominal  } }

\addplot [color=mycolor4, dashed, line width=1.4pt]
table[row sep=crcr]{%
	1	0\\
};
\addlegendentry{ {\small Full Horizon (FH) } }

\addplot [color=mycolor8, line width=1.4pt]
table[row sep=crcr]{%
	1	0\\
};
\addlegendentry{ {\small $ T = 60 $ s} }

\end{axis}
\end{tikzpicture}% \\
	\subfloat[][\small{Reverse parking scenario}]{
		\setlength\figureheight{0.1688\textwidth}
		\setlength\figurewidth{0.45\textwidth}
		\input{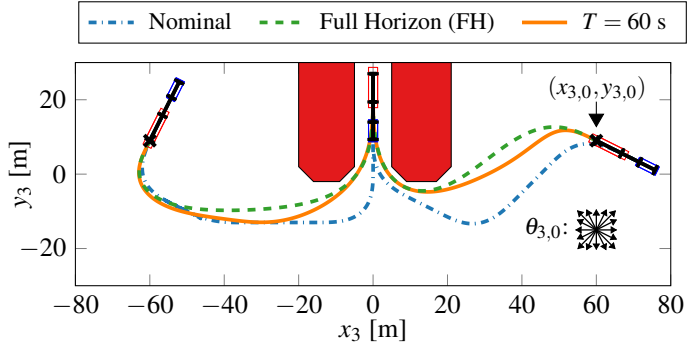}
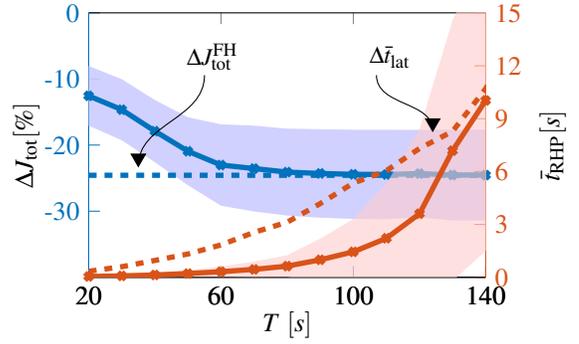 
		\label{fig:rp_ill}
	} \subfloat[][\small Improvement and computation time vs. planning horizon] {
	\setlength\figureheight{0.2\textwidth}
	\setlength\figurewidth{0.3\textwidth}
	% This file was created by matlab2tikz.
%
%The latest updates can be retrieved from
%  http://www.mathworks.com/matlabcentral/fileexchange/22022-matlab2tikz-matlab2tikz
%where you can also make suggestions and rate matlab2tikz.
%
\definecolor{mycolor1}{rgb}{0.00000,0.44700,0.74100}%
\definecolor{mycolor2}{rgb}{0.85000,0.32500,0.09800}%
\begin{tikzpicture}

\begin{axis}[%
width=\figurewidth,
height=\figureheight,
at={(0\figurewidth,0\figureheight)},
scale only axis,
xmin=20,
xmax=140,
separate axis lines,
every outer y axis line/.append style={mycolor1},
every y tick label/.append style={font=\color{mycolor1}},
every y tick/.append style={mycolor1},
ymin=60,
ymax=100,
ytick={ 70, 80,  90, 100},
yticklabels={ -30, -20,  -10, 0},
%axis y discontinuity=crunch, enlargelimits=false,
%axis x line*=bottom,
axis y line*=left,
xtick={20, 60, 100, 140},
ylabel style={yshift=-0.4cm}, %shifting the y line text
xlabel style={yshift=0.2cm}, %shifting the y line text
ylabel={$\Delta J_{\text{tot}} [\%] $},
xlabel={$T$ $[s]$},
axis background/.style={fill=white}
]
\addplot[area legend, draw=none, fill opacity=0.9, fill=white!80!blue, forget plot]
table[row sep=crcr] {%
x	y\\
20	91.964538995485\\
30	89.9737357729241\\
40	86.8239820608943\\
50	84.2085806831557\\
60	83.1372939423152\\
70	82.9413826589032\\
80	82.4834505549378\\
90	82.3973973919015\\
100	82.3221492242033\\
110	82.3138075543878\\
120	82.3000815789791\\
130	82.3339240686393\\
140	82.3350942925915\\
150	82.32697846651\\
150	68.6221280810266\\
140	68.581438390167\\
130	68.6090659394858\\
120	69.073011184185\\
110	68.7880265490509\\
100	68.8276822301188\\
90	69.0139338310645\\
80	69.3351571865363\\
70	69.9579812134639\\
60	70.8216965968666\\
50	73.9264460899872\\
40	77.3998981666605\\
30	80.734591612665\\
20	82.9355752348994\\
20	91.964538995485\\
}--cycle;
\addplot [color=mycolor1, line width=2.0pt,mark=x, forget plot]
table[row sep=crcr]{%
20	87.4500571151922\\
30	85.3541636927945\\
40	82.1119401137774\\
50	79.0675133865715\\
60	76.9794952695909\\
70	76.4496819361836\\
80	75.9093038707371\\
90	75.705665611483\\
100	75.5749157271611\\
110	75.5509170517193\\
120	75.686546381582\\
130	75.4714950040625\\
140	75.4582663413793\\
150	75.4745532737683\\
};
\addplot [color=mycolor1, dashed, line width = 2.0pt, forget plot]
table[row sep=crcr]{%
	20	75.4317450370988\\
	150	75.4317450370988\\
};
\end{axis}

\begin{axis}[%
width=\figurewidth,
height=\figureheight,
at={(0\figurewidth,0\figureheight)},
scale only axis,
xmin=20,
xmax=140,
xtick style={draw=none},
every outer y axis line/.append style={mycolor2},
every y tick label/.append style={font=\color{mycolor2}},
every y tick/.append style={mycolor2},
ymin=0,
ymax=15,
ytick={0, 3, 6, 9, 12,15},
xtick={20, 60, 100, 140},
ylabel style={yshift=0.9cm}, %shifting the y line text
xlabel style={yshift=0.2cm}, %shifting the y line text
ylabel={$\bar{t}_{\text{RHP}} [s] $},
axis x line*=bottom,
axis y line*=right, ylabel near ticks,
]
\addplot[area legend, draw=none, fill opacity=0.6, fill=white!80!red, forget plot]
table[row sep=crcr] {%
x	y\\
20	0.00443528256005801\\
30	0.00357134259229305\\
40	0.0137362157406317\\
50	0.0244611588884274\\
60	0.0241030939327858\\
70	0.00609566621905361\\
80	0.0151890852552337\\
90	-0.205496681816805\\
100	-0.416851939179794\\
110	-0.893822747901095\\
120	-1.19887533942957\\
130	-0.209409441315991\\
140	1.49865437795503\\
150	4.2425527213251\\
150	24.7355447752856\\
140	18.5680921264554\\
130	14.5859830711843\\
120	8.41515565673608\\
110	5.32919574537506\\
100	3.31128420017491\\
90	2.19808057031062\\
80	1.25887064606572\\
70	0.897176022973909\\
60	0.618837019565481\\
50	0.407018005821784\\
40	0.25911605507018\\
30	0.163064384465326\\
20	0.0970404818644676\\
20	0.00443528256005801\\
}--cycle;
\addplot [color=mycolor2, line width=2.0pt, mark=x, forget plot]
table[row sep=crcr]{%
20	0.0507378822122628\\
30	0.0833178635288094\\
40	0.136426135405406\\
50	0.215739582355106\\
60	0.321470056749133\\
70	0.451635844596481\\
80	0.637029865660478\\
90	0.996291944246906\\
100	1.44721613049756\\
110	2.21768649873698\\
120	3.60814015865326\\
130	7.18828681493417\\
140	10.0333732522052\\
150	14.48904874830540\\
};
\addplot [color=mycolor2, dashed, line width = 2.0pt, forget plot]
table[row sep=crcr]{%
	20	0.344369530677795\\
	30	0.604367926716805\\
	40	0.958723664283752\\
	50	1.32527716457844\\
	60	1.84156015515327\\
	70	2.5543914437294\\
	80	3.12871515750885\\
	90	4.18559192866087\\
	100	5.34836608171463\\
	110	6.05342035740614\\
	120	7.31096279621124\\
	130	8.2842253446579\\
	140	10.7025585398078\\
	150	13.9381398186088\\
};
\node[anchor=east] (t) at (axis cs:68, 12.5){\small{$\Delta J^{\text{FH}}_{\text{tot}}$}};
\node[anchor=east] (t) at (axis cs:120, 12.5){\small{$\Delta\bar{t}_{\text{lat}}$}};
\node (s2) at (axis cs:55,12){};
\node (p2) at (axis cs:35,5.7){};
\draw[->,-triangle 60] (s2) to [out=-90,in=90] (p2);
\node (s3) at (axis cs:107,12){};
\node (p3) at (axis cs:124,7.7){};
\draw[->,-triangle 60] (s3) to [out=-90,in=90] (p3);

\end{axis}
\end{tikzpicture}% \label{fig:rp_res}\vspace{-10pt} 
} 
\vspace{-6pt}
\caption{\small{(a): Reverse parking scenario from 32 different initial states. The nominal path (dashdotted) compared to the paths after applying the RHP algorithm using $T=60$ (solid) and the path using full horizon (FH) improvement (dashed). (b): The average difference in objective function value $\Delta J_{\text{tot}}$, and the average computation time per RHP iteration $\bar{t}_{\text{RHP}}$ using different planning horizons $T$ in Algorithm~\ref{alg:rhi}. The shaded area represents $\pm$ one standard deviation. Finally, $\Delta J_{\text{tot}}^{\text{FH}}$ (dashed blue) represents average difference in objective function value using FH improvement, and $\Delta \bar{t}_{\text{lat}}$ (dashed red) is the average difference in latency time.}} \label{fig:rv}
\end{figure*}
The control signal to the truck and trailer system is \mbox{$\bm{u}^T = [u_{\omega}\hspace{5pt} u_{a}]$}. The vehicle's geometry coincides with the one used in~\cite{ljungqvist2019path}. The control signal and the vehicle states are constrained as 
\begin{equation*}
\begin{aligned}
&|\beta_3| \leq 0.87, &&|\beta_2| \leq 0.87, &&& |\alpha| \leq 0.73, &&&& |\omega| \leq  0.8, \\
&|v_1| \leq 1.0, &&|a_1| \leq 1.0, &&& |u_\omega| \leq 10, &&&&|u_a| \leq 40,  
\end{aligned}
\end{equation*}
and the cost function is chosen as
\begin{equation} \label{eq:cost}
\ell(\bm{x}, \bm{u}) = 1 + \frac{1}{2}\left(\alpha^2 + 10\omega^2 + a_1^2 + \bm{u}^T\bm{u}  \right),
\end{equation}
which is used both in the lattice-based planner \emph{and} the proposed optimization-based RHP approach as suggested in~\cite{bergman2019bimproved}.
\vspace*{-0.5em}
\subsection{Lattice-based motion planner}
\vspace*{-0.5em}
As previously mentioned, a lattice-based planner is used in a first step to compute a nominal trajectory to the terminal state. The lattice-based planner uses a discretized state space $\mathcal{X}_d$ and a library of precomputed motion primitives $\mathcal P$. During online planning, a nominal trajectory to the terminal state is computed using A$^{\star}$ graph search together with a precomputed free-space heuristic look-up table (HLUT)~\citep{knepper2006high}. %For this, a state space discretization $\mathcal{X}_d$ need to be selected. 
In this work, we use a similar state-space discretization $\mathcal{X}_d$ as in~\cite{ljungqvist2019path}, where the position of the semitrailer is discretized to a uniform grid with resolution $r=1$ m and the orientation of the semitrailer is irregularly discretized $\theta_3 \in \Theta$ into $|\Theta|=16$ different orientations. It is done to be able to compute short straight trajectories from each $\theta_3 \in \Theta$~\citep{pivtoraiko2009differentially}. One difference compared to~\cite{ljungqvist2019path} is that the longitudinal velocity is here also discretized as \mbox{$v_1 \in \mathcal{V} = \{-1, 0, 1\}$}. All other vehicle states are constrained to zero for all discrete states in $\mathcal{X}_d$ as was done in~\cite{ljungqvist2019path}. Note, however, that on the trajectory between two states in $\mathcal{X}_d$, the system is free to take any feasible state. 

The motion primitive set $\mathcal P$ is computed offline using the framework presented in~\cite{bergman2019improved} and consists of straight, parallel and heading change maneuvers between discrete states in $\mathcal{X}_d$. Velocity changes between discrete states are only allowed during straight motions. At each discrete state with nonzero velocity, heading change maneuvers are computed to the eight closest adjacent headings in $\Theta$, and parallel maneuvers ranging from $\pm 10$ m with $1$ m resolution. The final motion primitive set $\mathcal{P}$ consists of 1184 motion primitives. More details of the lattice-based planner is found in~\cite{bergman2019improved}.   

%To speed up the A$^{\star}$ graph search used by the lattice-based planner online to solve motion planning problems, a precomputed free-space heuristic lookup-table (HLUT)~\citep{knepper2006high} is used as a well-informed heuristic function. For more details, the reader is referred to~\citep{knepper2006high}. 
\vspace*{-0.5em}
\subsection{Simulation results}
\vspace*{-0.5em}
The proposed optimization-based RHP approach is evaluated on a reverse parking scenario (see Fig.~\ref{fig:rv}) and a parallel parking scenario (see Fig~\ref{fig:pp}). The obstacles and vehicle
bodies are described by bounding circles~\citep{lavalle2006planning}. In all simulations, the time between two consecutive RHP iterations is $\delta=0.5$ s. During the simulations, it is assumed that a trajectory-tracking controller is used to follow the computed trajectories with high accuracy between each RHP iteration, however the controller design is out of the scope in this work.

The results for the reverse parking scenario are presented in Fig.~\ref{fig:rv} and Table~\ref{tab:rev_park}. As shown in Fig.~\ref{fig:rp_res}, the average difference in objective function value $\Delta J_{\text{tot}}$ increases as the planning horizon grows. The maximum achievable improvement is 26.5\% compared to the nominal solution computed by the lattice-based planner. However, extending the planning horizon beyond $T=60$ s only leads to a minor improvement. More precisely, if the full horizon (FH) in~\eqref{eq:cctoc} is improved in a single iteration as done in~\cite{bergman2019bimproved} (i.e. not using a \emph{receding} horizon approach), only an additional improvement of $3.5$\% is obtained.  
Furthermore, the average computation time for one RHP iteration $\bar{t}_{\text{RHP}}$ grows with longer planning horizon (especially for \mbox{$T>100$ s}), which is mainly due to increased problem dimension of the resulting NLP. Since the time needed to execute the trajectory is included in the cost function~\eqref{eq:cost}, a practically relevant performance measure is the total time to reach the terminal state $t_{\text{tot}}$, which is the computation time before trajectory execution can start, i.e. the latency time, plus the trajectory execution time. When the nominal solution is improved using the RHP algorithm, the additional latency time $\Delta t_{\text{lat}}$ depends only on the computation time for the first RHP iteration, since the remaining improvements are done during execution. In Table~\ref{tab:rev_park}, it is shown that the average difference in total time $\Delta \bar{t}_{\text{tot}}$ between using and not using the RHP algorithm obtains its minimum at a planning horizon of \mbox{$60-80$ s} in this scenario. Using a planning horizon in this interval, the vehicle will in average reach the terminal state more than \mbox{$30$ s} faster (latency time + motion execution time) than if the nominal trajectory is planned and executed without improvement. 

\begin{table}[t!]
	\caption{\small Summary of results from the reverse parking scenario in Fig.\ref{fig:rv}. See Fig.~\ref{fig:rv} and Fig.~\ref{fig:pp} for a description of the variables.    } \label{tab:rev_park}	
	\normalsize
	\centering
	\begin{tabular}{ccccccc}	
		$T$ [s]   & 20 & 40 & 60 & 80  & 120 & FH   \\
		\hline
		$\Delta J_{\mathrm{tot}}$ [\%] & -12.5 & -14.4 & -23.0 & -24.1 & -26.3 & -26.5 \\
		$\bar{t}_{\text{RHP}}$ [s] & 0.05 & 0.14 & 0.32 & 0.64 & 3.6 & 14.0 \\
		$\Delta \bar{t}_{\text{lat}}$ [s] & 0.34 & 0.96 & 1.8 & 3.1 & 7.3 & 14.0 \\
		$\Delta \bar{t}_{\text{tot}}$ [s] & -16.6 & -23.5 & -30.2 & -30.3 & -26.2 & -22.8 \\
		\hline
	\end{tabular}
\end{table}

The results for the parallel parking scenario (Fig.~\ref{fig:pp} and Table~\ref{tab:par_park}) are similar to the ones for the reverse parking scenario. The main differences are that the average decrease in total time $\Delta \bar{t}_{\text{tot}}$ and total objective function value $\Delta J_{\text{tot}}$ are even more significant in this scenario, with a maximum objective function value improvement of more than 40\%. The reason for this is because the lattice-based planner computes a nominal trajectory that is further away from a locally optimal solution due to the confined environment, which leaves large possibilities for improvement to the RHP algorithm. One illustrative example of this is shown in Fig.~\ref{fig:alphas}, where it can be seen that the terminal time is nearly halved compared to the nominal solution. Moreover, as can be seen in Table~\ref{tab:par_park} and Fig.~\ref{fig:pp_res}, also in this example $\Delta \bar{t}_{\text{tot}}$ and $\Delta J_{\text{tot}}$ are decreasing rapidly with increased planning horizon until \mbox{$T=60-80$ s}. Beyond that, only a minor additional decrease in $\Delta J_{\text{tot}}$ is obtained (full horizon: 2.9\%), whereas $\Delta \bar{t}_{\text{tot}}$ starts to increase due to an increased average computation time of the first RHP iteration. As a result, in this scenario the vehicle will in average reach the terminal state \mbox{$54$ s} faster (latency time + motion execution time) using the proposed RHP approach with planning horizon of \mbox{$T=80$ s} compared to when the nominal trajectory is planned and executed. 
\vspace*{-0.7em}
\begin{table}[b!]
	\caption{\small Summary of results from the parallel parking scenario in Fig.\ref{fig:pp}. See Fig.~\ref{fig:rv} and Fig.~\ref{fig:pp} for a description of the variables.    } \label{tab:par_park}	
	\normalsize
	\centering
	\begin{tabular}{ccccccc}	
		$T$ [s]  & 20 & 40 & 60 & 80  & 120 & FH   \\
		\hline
		$\Delta J_{\mathrm{tot}}$ [\%] & -24.9 & -35.2 & -40.8 & -41.7 & -43.4 & -43.7 \\
		$\bar{t}_{\text{RHP}}$ [s] & 0.09 & 0.29  & 0.77 & 2.0 & 10.4 & 17.0 \\
		$\Delta \bar{t}_{\text{lat}}$ [s] & 0.35 & 0.73  & 2.0 & 3.5 & 12.3 & 17.0 \\
		$\Delta \bar{t}_{\text{tot}}$ [s] & -32.6 & -45.7 & -53.7 & -54.0 & -45.9 & -44.1 \\
		\hline
	\end{tabular}
\end{table}
\begin{figure}[b!]
	\setlength\figureheight{0.15\textwidth}
	\setlength\figurewidth{0.4\textwidth}
	\input{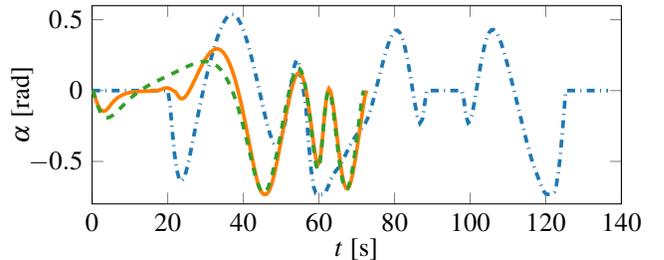}
	\vspace{-2em}
	\caption{\small The resulting steering angle trajectories for the highlighted example in Fig~\ref{fig:pp_ill}.} \label{fig:alphas}
\end{figure}
\begin{figure*}[t!] 
	\hspace{3.45em} \vspace{-2.5em}\definecolor{mycolor1}{rgb}{0.6510,0.8078,0.8902}%
\definecolor{mycolor2}{rgb}{0.1216,0.4706,0.7059}%
\definecolor{mycolor3}{rgb}{0.6980,0.8745,0.5412}%
\definecolor{mycolor4}{rgb}{0.2000,0.6275,0.1725}%
\definecolor{mycolor5}{rgb}{0.9843,0.6039,0.6000}%
\definecolor{mycolor6}{rgb}{0.8902,0.1020,0.1098}%
\definecolor{mycolor7}{rgb}{ 0.9922,0.7490,0.4353}%
\definecolor{mycolor8}{rgb}{1.0000,0.4980,0}%
\definecolor{mycolor9}{rgb}{ 0.7922,0.6980,0.8392}%
\definecolor{mycolor10}{rgb}{0.4157,0.2392,0.6039}%
\begin{tikzpicture}

\begin{axis}[%
hide axis,
width=0,
height=0,
at={(0,0)},
scale only axis,
xmin=0,
xmax=1,
ymin=0,
ymax=1,
axis background/.style={fill=white},
xmajorgrids,
ymajorgrids,
legend style={legend cell align=left,column sep=3pt, align=center},
legend columns=-1,
]
\addplot [color=mycolor2, dashdotted, line width=1.4pt]
table[row sep=crcr]{%
	1	0.0\\
};
\addlegendentry{ {\small Nominal  } }

\addplot [color=mycolor4, dashed, line width=1.4pt]
table[row sep=crcr]{%
	1	0\\
};
\addlegendentry{ {\small Full Horizon (FH) } }

\addplot [color=mycolor8, line width=1.4pt]
table[row sep=crcr]{%
	1	0\\
};
\addlegendentry{ {\small $ T = 60 $ s} }

\end{axis}
\end{tikzpicture}% \\
	\subfloat[][\small{Parallel parking scenario}]{
		\setlength\figureheight{0.1388\textwidth}
		\setlength\figurewidth{0.45\textwidth}
		\input{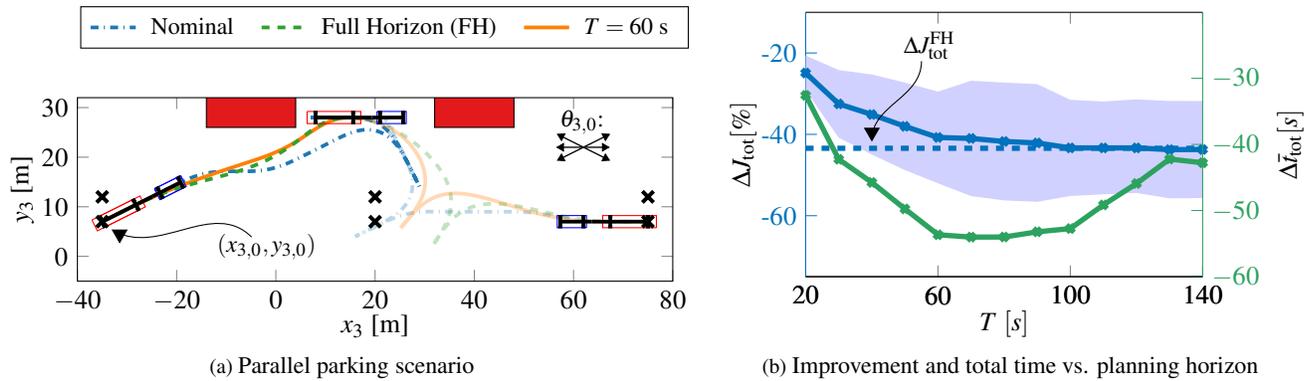} 
		\label{fig:pp_ill}
	} \subfloat[][\small Improvement and total time vs. planning horizon] {
	\setlength\figureheight{0.2\textwidth}
	\setlength\figurewidth{0.3\textwidth}
	% This file was created by matlab2tikz.
%
%The latest updates can be retrieved from
%  http://www.mathworks.com/matlabcentral/fileexchange/22022-matlab2tikz-matlab2tikz
%where you can also make suggestions and rate matlab2tikz.
%
\definecolor{mycolor1}{rgb}{0.00000,0.44700,0.74100}%
\definecolor{mycolor2}{rgb}{0.85000,0.32500,0.09800}%
\definecolor{mycolor3}{RGB}{44,162,95}%
\begin{tikzpicture}

\begin{axis}[%
width=\figurewidth,
height=\figureheight,
at={(0\figurewidth,0\figureheight)},
scale only axis,
xmin=20,
xmax=140,
separate axis lines,
every outer y axis line/.append style={mycolor1},
every y tick label/.append style={font=\color{mycolor1}},
every y tick/.append style={mycolor1},
ymin=25,
ymax=90,
ytick={40, 60,  80, 100},
xtick={20, 60, 100, 140},
yticklabels={ -60, -40,  -20, 0},
%axis y discontinuity=crunch, enlargelimits=false,
%axis x line*=bottom,
axis y line*=left,
ylabel style={yshift=-0.4cm}, %shifting the y line text
xlabel style={yshift=0.2cm}, %shifting the y line text
ylabel={$\Delta J_{\text{tot}} [\%] $},
xlabel={$T$ $[s]$},
axis background/.style={fill=white}
]
\addplot[area legend, draw=none, fill opacity=0.9, fill=white!80!blue, forget plot]
table[row sep=crcr] {%
x	y\\
20	79.3800982322184\\
30	75.8067879101174\\
40	74.7207893012617\\
50	72.720186239242\\
60	70.5102892826623\\
70	73.1754449389417\\
80	72.736592593335\\
90	72.3581087807566\\
100	68.4953671259477\\
110	68.0830244167977\\
120	68.5986096114777\\
130	68.208628326412\\
140	68.1972017100503\\
150	68.2127530323575\\
150	44.4987749280598\\
140	44.3479708006752\\
130	44.272317768565\\
120	45.5977162357434\\
110	45.2525837672998\\
100	44.8472139526506\\
90	43.4154377996382\\
80	43.7672249631692\\
70	44.8197872321803\\
60	47.9622156269202\\
50	51.2135145563169\\
40	55.0340889841326\\
30	59.1696461764247\\
20	70.7859476335606\\
20	79.3800982322184\\
}--cycle;
\addplot [color=mycolor1, line width=2.0pt,mark=x, forget plot]
  table[row sep=crcr]{%
20	75.0830229328895\\
30	67.488217043271\\
40	64.8774391426972\\
50	61.9668503977795\\
60	59.2362524547912\\
70	58.997616085561\\
80	58.2519087782521\\
90	57.8867732901974\\
100	56.6712905392991\\
110	56.6678040920488\\
120	56.66981629236106\\
130	56.2404730474885\\
140	56.2725862553628\\
150	56.3557639802086\\
};
\addplot [color=mycolor1, dashed, line width = 2.0pt, forget plot]
  table[row sep=crcr]{%
20	56.5760675552101\\
150	56.5760675552101\\
};
\end{axis}

\begin{axis}[%
width=\figurewidth,
height=\figureheight,
at={(0\figurewidth,0\figureheight)},
scale only axis,
xmin=20,
xmax=140,
xtick style={draw=none},
every outer y axis line/.append style={mycolor3},
every y tick label/.append style={font=\color{mycolor3}},
every y tick/.append style={mycolor3},
ymin=-60,
ymax=-20,
ytick={-60, -50, -40, -30},
xtick={20, 60, 100, 140},
ylabel style={yshift=0.9cm}, %shifting the y line text
xlabel style={yshift=0.2cm}, %shifting the y line text
ylabel={$\Delta \bar{t}_{\text{tot}} [s] $},
axis x line*=bottom,
axis y line*=right, ylabel near ticks,
]
\addplot [color=mycolor3, line width=2.0pt, mark=x, mark options={solid, mycolor3}, forget plot]
table[row sep=crcr]{%
	20	-32.5838958322188\\
	30	-42.2519197389345\\
	40	-45.7543600621934\\
	50	-49.7538209326634\\
	60	-53.6648560208764\\
	70	-54.0189387574597\\
	80	-54.0063499307638\\
	90	-53.2238469892953\\
	100	-52.7290112560475\\
	110	-49.1488299548069\\
	120	-45.9155887125402\\
	130	-42.2116500268729\\
	140	-42.7788451495267\\
	150	-42.0885214327421\\
};
\node[anchor=east] (t) at (axis cs:68, -25){\small{$\Delta J^{\text{FH}}_{\text{tot}}$}};
\node (s2) at (axis cs:55,-26){};
\node (p2) at (axis cs:40,-41){};
\draw[->,-triangle 60] (s2) to [out=-90,in=90] (p2);
\end{axis}
\end{tikzpicture}%\vspace{-10pt}\label{fig:pp_res}
} \\
\vspace{-7pt}
\caption{\small{(a): Parallel parking scenario from 36 different initial states. The nominal solution (dashdotted) is compared with the paths after applying the RHP algorithm using $T=60$ (solid) and the path using full horizon improvement (dashed). (b): The average difference in objective function value $\Delta J_{\text{tot}}$, and the average difference in total time $\Delta\bar{t}_{\text{tot}}$, i.e., trajectory execution time + computation time for the first RHP iteration, using different planning horizons $T$ in Algorithm~\ref{alg:rhi}. The shaded area represents $\pm$ one standard deviation.}} \label{fig:pp}
\vspace{-4pt} 
\end{figure*}

\section{Conclusions and Future Work} \label{sec:conc}

This paper introduces a new two-step trajectory planning algorithm built on a combination of a search-based motion planning algorithm and an optimization-based receding horizon planning (RHP) algorithm. While the motion planning algorithm quickly can compute a feasible, but often suboptimal, solution taking combinatorial aspects of the problem into account, the RHP algorithm based on direct optimal control techniques iteratively improves the solution quality towards the one typically achieved using direct optimal control. The receding horizon setup makes it possible for the user to conveniently trade off solution time and latency against solution quality. By exploiting the nominal dynamically feasible trajectory, a terminal manifold and a cost-to-go estimate are obtained, which make it possible to provide theoretical guarantees on recursive feasibility, non-increasing objective function value and convergence to the terminal state. These guarantees and the performance of the proposed method are successfully verified in a set of challenging trajectory planning problems for a truck and trailer system, where the proposed method is shown to significantly improve the nominal solution already for short receding planning horizons. 

Future work includes to modify the proposed receding horizon planner such that it can be applied in dynamic environments. Another extension is to improve real-time performance by using ideas from fast MPC.  

\section{Acknowledgments}
This work was partially supported by FFI/VINNOVA and the Wallenberg Artificial Intelligence, Autonomous Systems and Software Program (WASP) funded by Knut and Alice Wallenberg Foundation.
\newpage
\bibliography{myrefs.bib}		
\end{document}